\documentclass[3p, a4paper, 11pt]{elsarticle}

\usepackage{xcolor}
\usepackage{tikz}
\usepackage{layouts}
\biboptions{numbers,sort&compress}
\usepackage{algorithm}
\usepackage{algpseudocode}
\usepackage{gensymb}
\usepackage[unicode=true,colorlinks=true,pdfborder={0 0 0}]{hyperref}
\hypersetup{
    bookmarks=true,         
    bookmarksopen=true,     
    pdftoolbar=true,        
    pdfmenubar=true,        
    pdffitwindow=true,      
    pdfstartview={FitH},    
    pdfnewwindow=true,      
}
\usepackage{graphicx}
\usepackage{caption}
\usepackage{subcaption}
\usepackage{wrapfig}
\usepackage{multirow,makecell}
\usepackage{longtable}
\usepackage{booktabs}
\usepackage{tabularx}
\usepackage{setspace}
\usepackage{float}
\usepackage{enumerate}
\usepackage{enumitem}

\usepackage[amsthm,thmmarks]{ntheorem}
\usepackage{siunitx}
\usepackage{lineno}

\usepackage{math-symbols}
\usepackage{cases}

\graphicspath{{./images/}}

\theoremstyle{definition}
\newtheorem{theorem}{Theorem}
\newtheorem{proposition}[theorem]{Proposition}
\theoremstyle{definition}

\newtheorem{remark}{Remark}
\theoremstyle{definition}

\theoremstyle{definition}

\journal{XXX} 

\begin{document}

	\begin{frontmatter}
		\title{Data-driven balanced truncation of K-power bilinear systems}
		\author[home]{Xiaolong Wang\corref{cor1}}
		\ead{xlwang@nwpu.edu.cn}
		\author[home]{Biaolin Li}
		\author[coop]{Xiaoli Wang}
		\cortext[cor1]{Corresponding author}
		\address[home]{School of Mathematics and Statistics,
			Northwestern Polytechnical University, Xi'an 710072, China}
		\address[coop]{Xi'an Microelectronics Technology Institute, Xi'an 710065, 
		China}
		\begin{abstract}
			As a special type of bilinear systems, K-power bilinear systems possess 
			a special coupled structure along with nice properties in practice. In 
			this paper, we 
			investigate the data-driven counterpart of balanced truncation for 
			K-power systems. As the standard balanced truncation is performed based 
			on the subsystems of K-power systems, the main idea is to approximate 
			the quantities of each reduced subsystem with the evaluations of 
			transfer functions. We exploit the nice properties of Gramians for 
			K-power systems, 
			and establish the explicit relationship between the main quantities of 
			balanced truncation and the evaluation of transfer functions. As a 
			result, reduced models produced via balanced truncation can be assembled 
			approximately by the sample data of transfer functions, leading 
			to a data-driven balancing truncation method for K-power systems. An 
			advanced procedure is also provided to avoid the complex arithmetic 
			completely and produce real-valued reduced models. Two 
			numerical examples confirm the feasibility and effectiveness of the 
			proposed 
			method.
		\end{abstract}
		\begin{keyword}
			{Model order reduction, K-power systems, 
			Data-driven modeling, Balanced truncation, Gramians.}
		\end{keyword}
	\end{frontmatter}

\section{Introduction}\label{sec:introduction}

Large-scale dynamical systems arise frequently in all fields of engineering, such as 
in integrated circuits and micro systems, civil engineering, and 
Micro-Electro-Mechanical Systems \cite{Ramaswamy2000, Su1991, Benner2017book}. The 
fast simulation of such systems becomes an intractable task because of the 
unacceptably computational load. Moder order reduction (MOR) aims to replace a 
large-scale system with a lower order one so as to reduce the huge computational 
burden. The MOR techniques for linear systems have been well developed during the 
past years, mainly including two types of methods: methods based on the 
singular value decomposition (SVD) and methods based on Krylov subspace operators 
\cite{Antoulasbook2005}. MOR of nonlinear systems is generally much more difficult 
and challenging \cite{Baur2014,Pan2013,Chaturantabut2010}.

Bilinear systems are a special type of nonlinear systems, in which nonlinear terms 
arise from the product of the state and input. Due to the linear 
relationship with respect to the state and the input separately, bilinear 
systems are closely related to linear systems. Because a variety of nonlinear 
systems can be reformulated approximately as binlinear systems of high order via 
Carleman bilinearisation, it is a good medium to analyze the general nonlinear 
systems \cite{Gu2011,Rughbook1981}. There are some schemes dedicated to MOR of 
bilinear systems. The moment-matching method based on Krylov subspace techniques has 
been exploited to interpolate the multivariable transfer functions of bilinear 
systems \cite{Wang2012}. Another interpolatory strategy is 
also given in \cite {Flagg2015} to enforce multipoint interpolation of the 
underlying 
Volterra series of bilinear systems. Necessary conditions for the 
reduced 
order bilinear models to be $H_2$ optimal are given in \cite{Zhang2002}. An 
iterative algorithm is designed to yield a reduced model fulfilling these 
conditions, and it allows for an adaption of the successful iterative rational 
Krylov algorithm to bilinear systems \cite{Bennerbil2012}. The basic balanced 
truncation (BT) procedure is generalized initially to bilinear systems in 
\cite{Hsu1983}, 
and the interpretation of the input and output energies for balanced truncation is 
discussed in \cite{Bennerbil2011}. K-power bilinear systems is a special type of 
bilinear systems, and the input–output map of such systems is homogeneous with 
respect to the input of degree $k$. For this reason, they are also called degree-$k$ 
homogeneous systems \cite{Baiyat1993}. In \cite{Wang2014}, the moment-matching 
methods for K-power bilinear
systems are exploited to produce structure-preserved
reduced models from the perspective of bilinear systems and coupled 
systems, respectively, where the optimal $H_2$ MOR is also 
discussed. Alternatively, an approach building on the asymptotic 
expansion of K-power bilinear systems is reported in \cite{Qi2021}, where a desired 
number of expansion coefficients are preserved in the time domain. For BT method, 
the block diagonal structure of Gramians is proved in \cite{Baiyat1993}, and thereby 
a structure-preserved BT procedure is derived for K-power bilinear systems. 
Recently, a 
finite-time version of BT methods is used to enhance the approximation accuracy in a 
specific time interval for K-power bilinear systems \cite{Zhang2024}.

Recently, the standard BT procedure is executed in an 
approximate manner, and it entirely relies on the evaluations of transfer function 
and refrains from the intrusive access to any prescribed realization of the original 
system \cite{Gosea2022}. Because of the superior accuracy, the basic idea of such a
data-driven BT procedure has been applied to second-order systems with proportional 
damping, as well as linear systems with quadratic outputs \cite{Wang2025, 
Padhi2025}. Note that the data-driven MOR is an alternative approach for the 
simplification of large-scale systems, which based on the sample data in the time 
domain or frequency domain of the underlying system, instead of a specific 
mathematical model of the system. We refer the reader to \cite{Bhattacharjee2025, 
Ionita2014, Geelen2023, Huhn2023, Burohman2023} 
for more details on the data-driven approach. 

In this paper, we consider the data-driven counterpart of the BT procedure for 
K-power bilinear systems, and execute the nonintrusive BT based on the measurements 
of systems in the frequency domain. We start with the quadrature expression of 
Gramians for each subsystem of K-power bilinear systems, and employ a numerical 
quadrature rule to approximate Gramians in the frequency domain. As the 
controllability and observability Gramians of subsystems are coupled with a serial 
structure, but in a reverse order, the subsystems of K-power bilinear systems cannot 
be associated with the individual transfer functions directly from the perspective 
of bilinear systems. We devote to extracting the explicit expression for each 
Gramian 
in the framework of numerical quadrature, and derive a low-rank approximate 
decomposition for each Gramian. As a result, the main quantities involved in the 
intrusive BT can be approximated via the low-rank Gramians, and thereby can be 
calculated precisely via the measurements of the $k$-th transfer function. 
Besides, by choosing the quadrature nodes and weights in a symmetric manner with 
respect to the real axis in the numerical quadrature, we circumvent the complex 
arithmetic calculation in the execution, and provide a real-valued algorithm for the 
proposed data-driven BT procedure, leading to real-valued reduced models as 
well.

The paper is organized as follows. Section 2 introduces the preliminaries on K-power 
bilinear system. We start Section 3 with the standard BT, and approximate the 
Gramians via the numerical quadrature rule. A data-driven BT procedure is presented 
based on the evaluations of transfer function in real arithmetic. Numerical results 
are use to test our approach in 
Section 4. Finally, some conclusions are drawn in Section 5.

\section{Preliminaries}\label{sec:sec-2}
 
K-power bilinear systems are a special type of bilinear systems, which have the 
following state-space description 
\begin{equation}\label{S}
	\left\{
	\begin{aligned}
		\dot{x}(t) & =A x(t)+ \sum_{i=1}^p N_ix(t)u_i(t)+Bu(t), \\
		y(t) & =C x(t),
	\end{aligned}
     \right.
\end{equation}
where $x(t)\in\mathbb R^{n}$ is the state, $u(t)=[u_1(t), \cdots, 
u_p(t)]^\top\in\mathbb R^p$ is the input, and $y(t)\in\mathbb R^m$ is the output. 
We consider stable and minimal K-power systems, which can be reformulated 
as (\ref{S}) with special coefficient matrices 
\begin{equation}\label{S-str}
	A=\begin{bmatrix}
	A_1 & 0 & \cdots & 0\\
	0 & A_2 & \ddots & \vdots	\\
	\vdots & \ddots & \ddots &0\\
	0 & \cdots & 0 & A_k
	\end{bmatrix} \quad
    N_i=\begin{bmatrix}
    	0 & 0 & \cdots & 0\\
    	N_{1i} & 0 & \cdots & 0	\\
    	\vdots & \ddots & \ddots &\vdots\\
    	0 & \cdots & N_{(k-1)i} & 0
    \end{bmatrix} \quad
    B=\begin{bmatrix}
    	B_1\\
    	0\\
    	\vdots\\
    	0
    \end{bmatrix} \quad 
    C^\top=\begin{bmatrix}
    	0\\
    	\vdots\\
    	0\\
    	C_k^\top
    \end{bmatrix},
\end{equation}
where $A_j\in\mathbb R^{n_j\times n_j}$, $N_{ji}\in\mathbb R^{n_{j+1}\times n_j} 
(j=1, 2, \cdots, k-1)$, $A_k\in \mathbb R^{n_k\times n_k}$, $B_1\in\mathbb 
R^{n_1\times p}$, and $C_k\in\mathbb R^{m\times n_k}$. 
Accordingly, the state $x(t)$ is partitioned compatibly as $$x(t)=[x_1(t)^\top, 
x_2(t)^\top, \cdots, x_k(t)^\top]^\top,$$ where $x_j(t)\in\mathbb R^{n_j}$ 
for $j=1, 2, \cdots, k.$ For simplicity, we mainly focus on the single-input and 
single output (SISO) systems, i.e., $m=p=1$ in our 
discussion. However, all results 
obtained in this paper can be applied to multiple input and multiple output (MIMO) 
systems with some proper modification, as shown in Section 3.

Given the zero initial conditions, the $i$-th transfer function of (\ref{S}) 
reads                                    
\begin{equation}
	\begin{array}{l}
		H_{i}(s_{1}, \cdots, s_{i})= 
		C\Phi(s_i,A)N_1\Phi(s_{i-1},A) N_1 \cdots \Phi(s_1,A) B, 
	\end{array}
\end{equation}
where we use the function $\Phi(s, A)=(sI-A)^{-1}$. With the coefficient matrices 
(\ref{S-str}), one can verify directly that 
\begin{equation}\label{tran-f}
	\begin{split}
		H_{i}(s_{1}, \cdots, s_{i})&=0, \quad \text{for} \quad i\ne k\\
		H_{k}(s_{1}, \cdots, s_{k})&=C\Phi(s_k,A)N_1\cdots\Phi(s_{2},A) 
		N_1\Phi(s_1,A) B\\
		&=C_{k}\Phi(s_k,A_k) N_{(k-1)1}\cdots\Phi(s_2,A_2) N_{11}\Phi(s_1,A_1)
		B_{1}. 
	\end{split}
\end{equation}
This means that the dynamical behavior of K-power system can be determined 
completely via the $k$-th transfer function. In addition, it follows from 
(\ref{S-str}) that K-power systems can be rewritten as coupled systems
	\begin{equation}
		\left\{\begin{array}{l}
			\dot{x}_{1}(t)=A_{1} x_{1}(t)+B_{1} u(t), \\
			\dot{x}_{2}(t)=A_{2} x_{2}(t)+N_{11} x_{1}(t) u(t), \\
			\cdots, \\
			\dot{x}_{k}(t)=A_{k} x_{k}(t)+N_{(k-1)1}x_{k-1}(t) u(t), \\
			y(t)=C_{k} x_{k}(t). 
		\end{array}\right.
	\end{equation}
The above nice properties of K-power systems facilitate a lot the derivation of 
data-driven BT in next section.

\section{Data-driven BT of K-power bilinear 
systems}\label{sec:sec-3}
BT has been extensively studied for various systems \cite{*}. We first give a 
brief review on the standard BT for K-power systems, and then employ the 
numerical quadrature rule to present a data-driven counterpart in this section. 

\subsection{BT for K-power bilinear systems}	
Gramians play an important role in the standard BT procedure. Controllability 
Gramian $P$ and 
observability Gramians $Q$ of bilinear systems can be obtained by solving the 
following 
generalized Lyapunov equations 
\begin{equation}
	\begin{split}
		AP+PA^\top +N_1PN_1^\top+BB^\top=0,\\
		A^\top Q+QA+N_1^\top QN_1+C^\top C=0,
	\end{split}
\end{equation}
respectively. In the setting of K-power bilinear systems, it follows from 
(\ref{S-str}) that 
	\begin{equation}\label{PQ}
		P  =\operatorname{diag}\left[P_{11}, P_{22}, \cdots, P_{kk}\right],
		Q  =\operatorname{diag}\left[Q_{11}, Q_{22}, \cdots, Q_{kk}\right],
	\end{equation}
	where $P_{jj} $ and $ Q_{jj}$ solve the following Lyapunov 
	equations
	\begin{equation}
		\begin{array}{l}
			A_{1} P_{11}+P_{11} A_{1}^{\top}+B_{1} B_{1}^{\top}=0, \\
			A_{j} P_{j j}+P_{j j} A_{j}^ {\top}+ N_{(j-1)1} P_{(j-1)(j-1)} 
			N_{(j-1)1}^{\top}=0, 
			\quad j=2,3, \cdots, k
		\end{array}
	\end{equation}
	\begin{equation}
		\begin{array}{l}
			A_{k}^{\top} Q_{k k}+Q_{k k} A_{k}+C_{k}^{\top} C_{k}=0, \\
			A_{j}^{\top} Q_{jj}+Q_{jj} A_{j}+ N_{j1}^{\top} Q_{(j+1)(j+1)} N_{j1}=0, 
			\quad 
			j=k-1,k-2, \cdots, 1
		\end{array}
	\end{equation}
which implies that Gramians of K-power systems are available by solving the standard 
Lyapunov equations. Note that for stable K-power systems, $A_j$ are Hurwitz matrices 
for $j=1, 2, \cdots, k$, and each Lyapunov equation mentioned above has a unique 
solution. 

Once $P, Q$ have been determined, the balanced realization of (\ref{S}) 
can be obtained by applying a balancing matrix $T$. However, in order to perform a 
structure-preserving model reduction, a special balancing matrix 
$T=\mathrm{diag}\{T_1, T_2, \cdots, T_k\}$ with a 
block-diagonal structure is designed for BT of K-power systems. It is equivalent 
to performing model reduction from the subsystem point-of-view, that is, $T_j$ is 
designed to make the controllability and observability Gramians of subsystems equal 
and diagonal 
\begin{equation}
	\hat P_{jj}=\hat Q_{jj}=\hat \Sigma_j  \quad \text{for} \quad j=1, 2, \cdots, k.
\end{equation}
The singular values, diagonal elements of $\hat \Sigma_j$, can be used to determine 
the important modes and reduced models accordingly. Algorithm 1 summarizes the main 
steps of BT procedure for K-power bilinear systems. As the key quantities in step 2 
and step 4 of Algorithm 1 can be well approximated via the evaluation of transfer 
function in the frequency domain, a data-driven approach will be presented in the 
next subsection. 

\begin{algorithm}[htb]
	\caption{BT for K-power bilinear systems [6]}
	\label{bt-K-bilinear}
	\begin{algorithmic}[1]
		\Require
		System matrices $A_{j} \in \mathbb{R}^{n_{j} \times n_{j}}, N_{j1} \in 
		\mathbb{R}^{n_{j+1} \times n_{j}}$ $  B_{1} \in \mathbb{R}^{n_{1}}$  and $ 
		C_{k} \in \mathbb{R}^{1 \times n_{k}}$. 
		\Ensure
		Reduced system matrices $\hat A_{j}$, $\hat N_{j1}$, $\hat B_1$, $\hat C_k$.
		\State Compute the square factors $P_{j j}={L}_{j j}{L}_{j 
		j}^{\top}$, $Q_{jj}={R}_{jj}{R}_{j j}^{\top}$, and pick a 
		truncation 
		index $r_j$ for $j=1, 2, \cdots, k$.
		\State Compute SVD of ${R}^{\top}_{jj} 
		{L}_{jj}$ with the following partitioned form 
        \begin{equation*}
			{R}^{\top}_{jj} {L}_{jj}=\left[\begin{array}{ll}
				{U}_{j1} & {U}_{j2}
			\end{array}\right]\left[\begin{array}{ll}
				{S}_{j1} & \\
				&{S}_{j2}
			\end{array}\right]\left[\begin{array}{c}
				{Y}_{j1}^{\top} \\
				{Y}_{j2}^{\top}
			\end{array}\right],
	    \end{equation*}
        where ${S}_{j1}\in \mathbb R^{r_j\times r_j}$ and ${S}_{j2}\in \mathbb 
        R^{(n_j-r_j)\times (n_j-r_j)}$. 
		\State Assemble the projection matrices for each subsystem
		\begin{equation*}	
			{V}_{j}={L}_{jj} {Y}_{j1} {S}_{j1}^{-1 / 2} \quad
			\text {and} \quad {W}_{j}^{\top}={S}_{j1}^{-1 / 
			2}{U}_{j1}^{\top}{R}_{jj}^{\top}. 			
		\end{equation*}	
		\State Construct the balanced state-space matrices
		\begin{equation*}	
			\begin{array}{cc}
				\hat{N}_{j1}={W}_{j+1}^{\top} {N}_{j1} 
				{V}_{j}={S}_{(j+1)1}^{-1/ 
				2}{U}_{(j+1)1}^{\top}\left({R}_{(j+1)(j+1)}^{\top}N_{j1}{L}_{jj} 
				\right) {Y}_{j1}
				{S}_{j1}^{-1/2}, \\ 
				\hat{A}_{j}={W}_{j}^{\top} {A}_{j} 
				{V}_{j}={S}_{j1}^{-1/ 
					2}{U}_{j1}^{\top}\left({R}_{jj}^{\top}A_{j}{L}_{jj} \right) 
					{Y}_{j1}
				{S}_{j1}^{-1/2}, \\
				{\hat B}_{1}={W}_{1}^{\top} {B_1}={S}_{11}^{-1 / 
				2} {U}_{11}^{\top}\left({R}^{\top}_{11} 
				{B}_{1}\right), \\ 
				{\hat C}_{k}={C_k} 
				{V}_{k}=\left({C_k}L_{kk}\right) {Y}_{k1} 
				{S}_{k1}^{-1 / 2}. 
			\end{array}
		\end{equation*}	
		
	\end{algorithmic}
\end{algorithm}

\subsection{Quadrature-based approximation to main quantities via the sample 
data}\label{sec:sec-3.1}
	We consider the Gramians given in (\ref{PQ}). As $P_{jj}$ solves the standard 
	Lyapunov equation, it has the following quadrature-based definition in the time 
	domain 
	\begin{equation}
		\begin{split}
		P_{1 1}&=\int_{0}^{\infty} e^{{A_{1}} t}  B_{1} {B_{1}}^{\top}  
		e^{{A}^{\top}_{1}  
		t} d t,\\
		P_{j j}&=\int_{0}^{\infty} e^{{A_{j}} t} N_{(j-1)1} P_{(j-1)(j-1)} 
		N_{(j-1)1}^{\top} 
		e^{{A}^{\top}_{1} t} d t, 
		\end{split}	
	\end{equation}
for $j=2, 3, \cdots, k$. Let $\rm{i} = \sqrt{-1}$. The Parseval's theorem leads to 
an 
equivalent expression in the frequency domain as follows 
	\begin{equation}
		\begin{split}
		{P}_{11}=\frac{1}{2 \pi} \int_{-\infty}^{\infty}\Phi(\mathrm{i} 
		\omega_1,A_1)B_{1} {B}^{\top}_{1}\Phi(\mathrm{-i}\omega_1,A_1^{\top}) 
		d\omega_1,\\
		{P}_{jj}=\frac{1}{ 2\rm{\pi}} \int_{-\infty}^{\infty} \Phi(\mathrm{i} 
		\omega_j,A_j)N_{(j-1)1}{P}_{(j-1)(j-1)}N_{(j-1)1}^{\top}\Phi(-\mathrm{i} 
		\omega_j,A_j^{\top})d\omega_j
		\end{split}
	\end{equation}
for $j=2, 3, \cdots, k$. We now adopt the numerical quadrature rule to calculate 
$P_{jj}$ approximately in the frequency domain. Specifically, we have 
	\begin{equation*}	
		{P}_{11} \approx \widetilde{{P}}_{11}=\sum_{i_1=1}^{\gamma_1} 
		\rho_{i_1,1}^{2}\Phi(\mathrm{i} \lambda_{i_1,1},A_1){B}_{1} 
		{B}^{\top}_{1}\Phi(-\mathrm{i} \lambda_{i_1,1},A_1^{\top}),
	\end{equation*}	
where $\lambda_{i_1,1}$ and $\rho_{i_1,1}^2$ represent the numerical quadrature 
nodes 
and  
weights, respectively, and $\gamma_1$ is the total number of quadrature nodes. 
	With the same spirit, ${P}_{22}$ can be approximated as
\begin{equation*}
	\begin{split}
		P_{22}&\approx \sum_{i_2=1}^{\gamma_2} 
		\rho_{i_2,2}^{2}\Phi(\mathrm{i} \lambda_{i_2,2}, {A}_{2})N_{11} 
		P_{11} 
		N_{11}^{\top} \Phi(-\mathrm{i} \lambda_{i_2,2}, {A}^{\top}_{2})\\
		&\approx \sum_{i_2=1}^{\gamma_2} \sum_{i_1=1}^{\gamma_1}
		\rho_{i_2,2}^{2}\rho_{i_1,1}^{2}\Phi(\mathrm{i} \lambda_{i_2,2}, 
		{A}_{2})N_{11} 
		\Phi(\mathrm{i} \lambda_{i_1,1},A_1){B}_{1} 
		{B}^{\top}_{1}\Phi(-\mathrm{i} \lambda_{i_1,1},A_1^{\top}) 
		N_{11}^{\top} \Phi(-\mathrm{i} \lambda_{i_2,2}, {A}^{\top}_{2})\\		
		&=\widetilde{P}_{22}.
	\end{split}
\end{equation*}	
Likewise, for $j\leq k$, the numerical quadrature rule leads to 
\begin{equation*}
	P_{jj}\approx \widetilde{P}_{jj}=\sum_{i_j=1}^{\gamma_j} 
	\rho_{i_j,j}^{2}\Phi(\mathrm{i} \lambda_{i_j,j}, {A}_{j})N_{(j-1)1} 
	\widetilde P_{(j-1)(j-1)} 
	N_{(j-1)1}^{\top} \Phi(-\mathrm{i} \lambda_{i_j,j}, {A}^{\top}_{j}).
\end{equation*}		
Note that there are $\mathcal{N}_j=\gamma_1\cdots\gamma_j$ quadrature nodes in the 
approximation to $P_{jj}$. By defining the square-root factor  $\widetilde{L}_{jj} 
\in \mathbb{C}^{n_{j} \times \mathcal{N}_{j}} $ for $j=1, 2, \cdots, k$ as follows
\begin{equation}\label{L-factor}
	\begin{split}
		\widetilde{L}_{11}&=\left[\begin{array}{lll}
			\rho_{1, 1}\Phi(\mathrm{i} \lambda_{1,1},A_1){B}_{1}  & 
			\cdots & \rho_{\gamma_1,1}\Phi(\mathrm{i} 
			\lambda_{\gamma_1,1},A_1){B}_{1} 
		\end{array}\right] \in \mathbb{C}^{n_{1} \times \mathcal{N}_{1} }, \\
	\widetilde{L}_{jj}&=\left[\begin{array}{lll}
		\rho_{1,j}\Phi(\mathrm{i} 
		\lambda_{1,j},A_j)N_{(j-1)1}\widetilde{L}_{(j-1)(j-1)} & 
		\cdots & \rho_{\gamma_j,j}\Phi(\mathrm{i} 
		\lambda_{\gamma_j,j},A_j)N_{(j-1)1}\widetilde{L}_{(j-1)(j-1)}
	\end{array}\right]\in\mathbb{C}^{n_{j} \times\mathcal{N}_{j}},   	
	\end{split}
\end{equation}
Gramians of subsystems have the approximation  
${P}_{jj}\approx \widetilde{P}_{jj}=\widetilde{L}_{jj} \widetilde{L}_{jj}^{*}. $

Similarly, the observability Gramians $Q_{jj}$ of each subsystem 
have the following expression 
	\begin{equation*}
		\begin{split}
			{Q}_{jj}&=\frac{1}{2 \pi} \int_{-\infty}^{\infty} 
			\Phi(\mathrm{-i}\omega_j,A_j^{\top})N_{j1}^{\top} Q_{(j+1)(j+1)} 
			N_{j1}\Phi(\mathrm{i} 
			\omega_k,A_k)
			d\omega_k,\\
			{Q}_{kk}&=\frac{1}{2 \pi} \int_{-\infty}^{\infty} 
			\Phi(\mathrm{-i}\omega_k,A_k^{\top}) C_{k}^\top C_{k}\Phi(\mathrm{i} 
			\omega_k,A_k)
			d\omega_k
		\end{split}			
	\end{equation*}
for $j=1, 2, \cdots, k. $ With the quadrature nodes $\mu_{i_j,j}$ and weights 
$\phi_{i_j,j}$, we have the approximation to Gramians of each subsystem  
\begin{equation*}	
	\begin{split}
	{Q}_{kk} &\approx \widetilde{{Q}}_{kk}=\sum_{i_k=1}^{\gamma_k} 
	\phi_{i_k,k}^{2}\Phi(\mathrm{-i} \mu_{i_k,k},A_k^{\top}){C}_{k}^{\top} 
	{C}_{k}\Phi(\mathrm{i} \mu_{i_k,k},A_k),\\
		Q_{jj}&\approx \widetilde{Q}_{jj}=\sum_{i_j=1}^{\gamma_j} 
		\phi_{i_j,j}^{2}\Phi(\mathrm{-i} \mu_{i_j,j},A_j^{\top})N_{j1}^{\top} 
		\widetilde 
		Q_{(j+1)(j+1)} 
		N_{j1}\Phi(\mathrm{i} \mu_{i_j,j},A_j).
	\end{split}
\end{equation*}			
Note that there are $\bar{\mathcal{N}}_j=\gamma_k\cdots\gamma_j$ quadrature 
nodes in 
the approximation to $Q_{jj}$. The approximation has the expression $\widetilde 
{Q}_{jj}=\widetilde{R}_{jj}\widetilde{R}^{\top}_{jj}$ along with the square-root 
factor 
\begin{equation}\label{R-factor}
	\begin{split}	
		\widetilde{R}^{\top}_{kk}&=\left[\begin{array}{c}
			\phi_{1,k} C_{k}\Phi(\mathrm{i} \mu_{1,k},A_k)\\
			\phi_{2,k} C_{k}\Phi(\mathrm{i} \mu_{2,k},A_k)\\  
			\vdots \\
			\phi_{\gamma_k,k} C_{k}\Phi(\mathrm{i} \mu_{\gamma_k,k},A_k)\\ 
		\end{array}\right] \in \mathbb{C}^{\mathcal{\gamma}_{k} \times n_{k}}.\\
		\widetilde{R}^{\top}_{jj}&=\left[\begin{array}{c}
			\phi_{1,j}\widetilde{R}^{\top}_{(j+1)(j+1)}N_{j1}\Phi(\mathrm{i} 
			\mu_{1,j},A_j)\\
			\phi_{2,j}\widetilde{R}^{\top}_{(j+1)(j+1)}N_{j1}\Phi(\mathrm{i} 
			\mu_{2,j},A_j) \\  
			\vdots \\
			\phi_{\gamma_j,j}\widetilde{R}^{\top}_{(j+1)(j+1)}N_{j1}\Phi(\mathrm{i} 
			\mu_{\gamma_j,j},A_j)\\ 
		\end{array}\right] \in \mathbb{C}^{\bar{\mathcal{N}}_j \times n_{j}}.
	\end{split}
\end{equation}	

\begin{proposition}\label{pro-1}
Let $ \widetilde{L}_{jj}$ and $\widetilde{R}_{jj}$ be defined in (\ref{L-factor}) 
and (\ref{R-factor}) for $1 \leq j \leq k$. Define the matrices 
$\widetilde{\mathbb{U}}_{j}=\widetilde{R}_{jj}^{\top}\widetilde{L}_{jj}$ and  
$\widetilde{\mathbb{A}}_{j}=\widetilde{R}_{jj}^{\top}{A}_{j}\widetilde{L}_{jj}.$
For $1 \leq h\leq \mathcal{N}_{j}, 1 \leq l\leq \mathcal{\bar N}_{j}$, the $(h,l)$  
element of $\widetilde{\mathbb{U}}_{j}$ and 
$\widetilde{\mathbb{A}}_{j}$ can be expressed via the evaluations of the $k$-th 
transfer function as follows 
\begin{equation}\label{U_ele}
		\begin{split}
		\widetilde{\mathbb{U}}_{j}^{(h,l)}=&-\frac{\delta_{hl}H_k(\mathrm{i}\mu_{i_{k},k},
		 \cdots, \mathrm{i}\mu_{i_{j+1},j+1}, 
			\mathrm{i} \lambda_{i_{j},j}, \cdots, 
			\mathrm{i}\lambda_{i_1,1})}{\mathrm 
		i(\lambda_{i_j,j}-\mu_{i_j,j})}\\
		&+\frac{\delta_{hl}H_k(\mathrm{i}\mu_{i_{k},k}, \cdots, 
			\mathrm{i}\mu_{i_{j},j}, 
			\mathrm{i} \lambda_{i_{j-1},j-1}, \cdots, \mathrm{i} 
			\lambda_{i_1,1})}{\mathrm 
			i(\lambda_{i_j,j}-\mu_{i_j,j})},
	   \end{split}
\end{equation}
\begin{equation}\label{A_ele}
	\begin{split}	
		\widetilde{\mathbb{A}}_{j}^{(h,l)}=
		 &-\frac{\delta_{hl}\lambda_{i_j,j}H_k(\mathrm{i}\mu_{i_{k},k},
		 	\cdots, \mathrm{i}\mu_{i_{j+1},j+1}, 
		 	\mathrm{i} \lambda_{i_{j},j}, \cdots, 
		 	\mathrm{i}\lambda_{i_1,1})}{\lambda_{i_j,j}-\mu_{i_j,j}}\\
		 &+\frac{\delta_{hl}\mu_{i_j,j}H_k(\mathrm{i}\mu_{i_{k},k}, \cdots, 
		 	\mathrm{i}\mu_{i_{j},j}, 
		 	\mathrm{i} \lambda_{i_{j-1},j-1}, \cdots, \mathrm{i} 
		 	\lambda_{i_1,1})}{\lambda_{i_j,j}-\mu_{i_j,j}}, 
	\end{split}
\end{equation}
where the constant 
$\delta_{hl}=\phi_{i_k,k}\cdots\phi_{i_j,j}\rho_{i_j,j}\cdots\rho_{i_1,1}.$
\end{proposition}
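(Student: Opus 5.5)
The plan is to unfold the recursive definitions (\ref{L-factor}) and (\ref{R-factor}) into explicit products, multiply a single row of $\widetilde{R}_{jj}^{\top}$ by a single column of $\widetilde{L}_{jj}$, and then use a resolvent identity on the one middle factor in which both families of nodes meet. Indexing is the bookkeeping step. A column of $\widetilde{L}_{jj}$ is labelled by a multi-index $(i_1,\dots,i_j)$. By induction on $j$, that column equals
\[
\rho_{i_j,j}\cdots\rho_{i_1,1}\,\Phi(\mathrm{i}\lambda_{i_j,j},A_j)N_{(j-1)1}\cdots N_{11}\Phi(\mathrm{i}\lambda_{i_1,1},A_1)B_1 .
\]
Likewise, a row of $\widetilde{R}_{jj}^{\top}$ is labelled by $(i_k,\dots,i_j)$, and by downward induction from $j=k$ it equals
\[
\phi_{i_k,k}\cdots\phi_{i_j,j}\,C_k\Phi(\mathrm{i}\mu_{i_k,k},A_k)N_{(k-1)1}\cdots N_{j1}\Phi(\mathrm{i}\mu_{i_j,j},A_j).
\]
The index $h$ (respectively $l$) in the statement is identified with the corresponding multi-index, so the scalar prefactor of the $(h,l)$ entry is exactly $\delta_{hl}$.

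For $\widetilde{\mathbb{U}}_j$, the product of this row and this column has the two resolvents $\Phi(\mathrm{i}\mu_{i_j,j},A_j)\Phi(\mathrm{i}\lambda_{i_j,j},A_j)$ adjacent in the middle. I will apply the resolvent identity
\[
\Phi(a,A)\Phi(b,A)=\frac{\Phi(a,A)-\Phi(b,A)}{b-a},
\]
which follows from $(b-a)I=(bI-A)-(aI-A)$, with $a=\mathrm{i}\mu_{i_j,j}$ and $b=\mathrm{i}\lambda_{i_j,j}$, assuming $\lambda_{i_j,j}\neq\mu_{i_j,j}$. This splits the entry into two terms, each a full chain $C_k\Phi N\cdots N\Phi B_1$ with one resolvent in each block. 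By the second line of (\ref{tran-f}), each term is a value of $H_k$:
\begin{itemize}
\item Keeping $\Phi(\mathrm{i}\lambda_{i_j,j},A_j)$ gives $H_k(\mathrm{i}\mu_{i_k,k},\dots,\mathrm{i}\mu_{i_{j+1},j+1},\mathrm{i}\lambda_{i_j,j},\dots,\mathrm{i}\lambda_{i_1,1})$ with coefficient $-1/(\mathrm{i}(\lambda_{i_j,j}-\mu_{i_j,j}))$.
\item Keeping $\Phi(\mathrm{i}\mu_{i_j,j},A_j)$ gives $H_k(\mathrm{i}\mu_{i_k,k},\dots,\mathrm{i}\mu_{i_j,j},\mathrm{i}\lambda_{i_{j-1},j-1},\dots,\mathrm{i}\lambda_{i_1,1})$ with coefficient $+1/(\mathrm{i}(\lambda_{i_j,j}-\mu_{i_j,j}))$.
\end{itemize}
This is (\ref{U_ele}).

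For $\widetilde{\mathbb{A}}_j$, the middle factor becomes $\Phi(a,A_j)A_j\Phi(b,A_j)$. I will write $A_j\Phi(b,A_j)=b\Phi(b,A_j)-I$, which gives
\[
\Phi(a)A\Phi(b)=b\,\Phi(a)\Phi(b)-\Phi(a)=\frac{b\Phi(a)-b\Phi(b)}{b-a}-\Phi(a)=\frac{a\Phi(a)-b\Phi(b)}{b-a}.
\]
Substituting $a=\mathrm{i}\mu$ and $b=\mathrm{i}\lambda$, the factors of $\mathrm{i}$ cancel. The coefficients become $-\lambda_{i_j,j}/(\lambda_{i_j,j}-\mu_{i_j,j})$ on the term keeping $\Phi(b)$ and $+\mu_{i_j,j}/(\lambda_{i_j,j}-\mu_{i_j,j})$ on the term keeping $\Phi(a)$, attached to the same two transfer-function values as before. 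This is (\ref{A_ele}).

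I expect the main obstacle to be only the bookkeeping: checking that the nested block structure of $\widetilde{L}_{jj}$ and $\widetilde{R}_{jj}^{\top}$ produces exactly the claimed multi-index ordering, and that the weights collect into $\delta_{hl}$. I would settle this by a clean induction on $j$ for each factor before multiplying. I would also state explicitly the standing assumption that the left and right nodes at level $j$ are distinct, $\lambda_{i_j,j}\neq\mu_{i_j,j}$, since otherwise the formulas are undefined.
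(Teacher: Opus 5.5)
Your proof is correct and follows the paper's argument essentially step for step. You write out a generic row of $\widetilde R_{jj}^{\top}$ and a generic column of $\widetilde L_{jj}$, split the adjacent pair $\Phi(\mathrm{i}\mu_{i_j,j},A_j)\Phi(\mathrm{i}\lambda_{i_j,j},A_j)$ (resp.\ $\Phi(\mathrm{i}\mu_{i_j,j},A_j)A_j\Phi(\mathrm{i}\lambda_{i_j,j},A_j)$) by a resolvent identity, and read each resulting chain as a value of $H_k$ via (\ref{tran-f}). The only difference is that you derive the second identity from $A_j\Phi(b,A_j)=b\Phi(b,A_j)-I$, where the paper simply quotes it; your explicit hypothesis $\lambda_{i_j,j}\neq\mu_{i_j,j}$ is needed and is left implicit in the paper.
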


\begin{proof}
Without loss of generality, the $h$-th row of $\widetilde R^\top_{jj}$ reads 
\begin{equation*}
	\phi_{i_k,k}\dots\phi_{i_j,j} C_{k}\Phi(\mathrm{i} 
	\mu_{i_k,k},A_k)N_{(k-1)1}\Phi(\mathrm{i} \mu_{i_{k-1},k-1},A_{k-1})\cdots 
	N_{j1}\Phi(\mathrm{i} \mu_{i_{j},j},A_{j}), 
\end{equation*}
and the $l$-th column of $\widetilde L_{jj}$ reads 
\begin{equation*}
	\rho_{i_1,1}\dots\rho_{i_j,j}\Phi(\mathrm{i} 
	\lambda_{i_{j},j},A_{j})N_{(j-1)1}\dots\Phi(\mathrm{i} 
	\lambda_{i_{2},2},A_{2})N_{11}\Phi(\mathrm{i} \lambda_{i_{1},1},A_{1})B_{1}.
\end{equation*}
Consequently, the $(h,l)$ element of $\widetilde{\mathbb{U}}_{j}$ has the following 
expression 
\begin{equation}\label{equality_U}
    \widetilde{\mathbb{U}}_{j}^{(h,l)}=\delta_{hl}C_{k}\Phi(\mathrm{i} 
    \mu_{i_k,k},A_k)\cdots 
    N_{j1}\Phi(\mathrm{i} \mu_{i_{j},j},A_{j})\Phi(\mathrm{i} 
    \lambda_{i_{j},j},A_{j})N_{(j-1)1}\dots\Phi(\mathrm{i} 
    \lambda_{i_{1},1},A_{1})B_{1}. 
\end{equation}	
For any square matrix, $X$, and any $a, b\in \mathbb C$ that are not eigenvalues of 
$X$, there exists the identity 
\begin{equation*}
	(aI-X)^{-1}(bI-X)^{-1}=\frac{1}{a-b}\left((bI-X)^{-1}-(aI-X)^{-1}\right).
\end{equation*}
It follows from the above equality that 
\begin{equation*}
	\Phi(\mathrm{i} \mu_{i_{j},j},A_{j})\Phi(\mathrm{i} 
	\lambda_{i_{j},j},A_{j})=\frac{1}{\mathrm i(\mu_{i_{j},j}-\lambda_{i_{j},j})}
	\left(\Phi(\mathrm{i} 
	\lambda_{i_{j},j},A_{j})-\Phi(\mathrm{i} \mu_{i_{j},j},A_{j})\right). 
\end{equation*}
Substituting the above equality into \eqref{equality_U} leads to 
\begin{equation*}
	\widetilde{\mathbb{U}}_{j}^{(h,l)}=\frac{\delta_{hl}}{\mathrm 
	i(\mu_{i_{j},j}-\lambda_{i_{j},j})}C_{k}\Phi(\mathrm{i} 
	\mu_{i_k,k},A_k)\cdots 
	N_{j1}\left(\Phi(\mathrm{i} 
	\lambda_{i_{j},j},A_{j})-\Phi(\mathrm{i} 
	\mu_{i_{j},j},A_{j})\right)N_{(j-1)1}\dots\Phi(\mathrm{i} 
	\lambda_{i_{1},1},A_{1})B_{1}. 
\end{equation*}
As a result, one can get \eqref{U_ele} by using the definition in \eqref{tran-f}. 

Similarly, there holds the identity 
\begin{equation*}
	(aI-X)^{-1}X(bI-X)^{-1}=\frac{1}{a-b}\left(b(bI-X)^{-1}-a(aI-X)^{-1}\right).
\end{equation*}
It follows that 
\begin{equation*}
	\Phi(\mathrm{i} \mu_{i_{j},j},A_{j})A_j\Phi(\mathrm{i} 
	\lambda_{i_{j},j},A_{j})=\frac{1}{\mu_{i_{j},j}-\lambda_{i_{j},j}}
	\left(\lambda_{i_{j},j}\Phi(\mathrm{i} 
	\lambda_{i_{j},j},A_{j})-\mu_{i_{j},j}\Phi(\mathrm{i} 
	\mu_{i_{j},j},A_{j})\right).  
\end{equation*}
Then, one can validate \eqref{A_ele} readily via the above equality in the same way. 
\end{proof}

Furthermore, we define the matrices 
$$\widetilde{\mathbb{N}}_{j}=\widetilde{R}_{(j+1)(j+1)}^{\top}{N}_{j1}\widetilde{L}_{jj},
\widetilde{\mathbb{B}}_{1}=\widetilde{R}_{11}^{\top}B_{1}, 
\widetilde{\mathbb{C}}_{k}=C_{k}\widetilde{L}_{kk}.$$ 
The $(h,l)$ element of $\widetilde{\mathbb{N}}_{j}$ has the expression 
\begin{equation}\label{N-exp}
\widetilde{\mathbb{N}}_{j}^{(h,l)}=\frac{\delta_{hl}}{\phi_j}
H_k(\mathrm{i}\mu_{i_{k},k},
\cdots, \mathrm{i}\mu_{i_{j+1},j+1}, 
\mathrm{i} \lambda_{i_{j},j}, \cdots, 
\mathrm{i}\lambda_{i_1,1}). 
\end{equation}
The $h$-th element of $\widetilde{\mathbb{B}}_{1}$ reads
\begin{equation}\label{B-exp}
	\widetilde{\mathbb{B}}_{1}^{(h)}=\phi_{i_k,k}\dots\phi_{i_1,1}
	H_k(\mathrm{i}\mu_{i_{k},k},
	\cdots, \mathrm{i}\mu_{i_{j+1},j+1}, 
	\mathrm{i} \mu_{i_{j},j}, \cdots, 
	\mathrm{i}\mu_{i_1,1}). 
\end{equation}
The $l$-th element of $\widetilde{\mathbb{C}}_{k}$ reads
\begin{equation}\label{C-exp}
	\widetilde{\mathbb{C}}_{k}^{(l)}=\rho_{i_k,k}\dots\rho_{i_1,1}
	H_k(\mathrm{i}\lambda_{i_{k},k},
	\cdots, \mathrm{i}\lambda_{i_{j+1},j+1}, 
	\mathrm{i} \lambda_{i_{j},j}, \cdots, 
	\mathrm{i}\lambda_{i_1,1}). 
\end{equation}

\begin{remark}
	We use the notations 
	$h(j)=\sum_{d=j}^{k-1}(i_d-1)\gamma_k\cdots\gamma_{d+1}+i_k$ 
	and $l(j)=\sum_{d=2}^{j}(i_d-1)\gamma_{1}\cdots\gamma_{d-1}+i_1$ for 
	$j=1,2,\cdots, k$. It follows 
	from the definition of $\widetilde{R}^{\top}_{jj}$ and $\widetilde{L}_{jj}$ that 
	$h=h(j)$ and $l=l(j)$ in the proof of 
	Proposition \ref{pro-1}. For the $(h,l)$ element of 
	$\widetilde{\mathbb{N}}_{j}$, we have $h=h(j+1), l=l(j)$, and $h=h(1), l=l(k)$ 
	in the definition of $\widetilde{\mathbb{B}}_{1}$ and 
	$\widetilde{\mathbb{C}}_{k}$.  
\end{remark}

\begin{remark}
	For brevity, we use the same number of quadrature nodes in 
	the approximations $\widetilde{P}_{jj}$ and $\widetilde{Q}_{jj}$ about 
	$\omega_j$ in above 
	discussion. In general, one can choose a different number of quadrature 
	nodes for the approximation to $P_{jj}$ and $Q_{jj}$. 
\end{remark}

Now we are ready to present the data-driven BT for K-power bilinear systems. With 
the approximation 
$\widetilde{\mathbb{U}}_{j}=\widetilde{R}_{jj}^{\top}\widetilde{L}_{jj}\approx{R}^{\top}_{jj}
 {L}_{jj}$, the step 2 of Algorithm \ref{bt-K-bilinear} can be executed directly. 
 Alongside the approximations $\widetilde{\mathbb A}_j, \widetilde{\mathbb N}_j, 
 \widetilde{\mathbb B}_1$ and $\widetilde{\mathbb C}_k$, the whole algorithm can be 
 implemented based on 
 the evaluations of transfer function at some sampling points for a given numerical 
 quadrature rule. The main steps of the proposed method are summarized in Algorithm 
 \ref{data_BT}. 
 
\begin{algorithm}[htb]
	\caption{Data-driven BT of K-power bilinear systems}
	\label{data_BT}
	\begin{algorithmic}[1]
		\Require
		The weights $\rho_{i_j,j}, \mu_{i_j, j}$ and quadrature nodes 
		$\mathrm{i}\lambda_{i_j, j}, \mathrm{i}\phi_{i_j, j}$ for $1\le i_j \le 
		\gamma_j$ and $j=1, 
		2, \cdots, k$; The evaluation of the transfer function $H_k$ at the 
		nodes, and the 
		index $1\le r_j \le \min\{\mathcal{N}_{j},\bar{\mathcal{N}}_j\}$. 
		\Ensure
		Reduced models  $\widetilde{A}_j, \widetilde{N}_{j1}, \widetilde{B}_1, 
		\widetilde{C}_k$.
		
		\State  Assemble the main terms $\widetilde{\mathbb U}_j, 
		\widetilde{\mathbb A}_j, \widetilde{\mathbb N}_j, \widetilde{\mathbb 
			B}_1, \widetilde{\mathbb C}_k$ via \eqref{U_ele}, \eqref{A_ele}, 
		\eqref{N-exp}, \eqref{B-exp} and \eqref{C-exp}, respectively. 
		
		\State Compute the SVD of the matrix $\widetilde{\mathbb U}_j$ for $j=1, 
		2, \cdots, k$
		\begin{equation*}
			\widetilde{\mathbb U}_j=\left[\begin{array}{ll}
				{\widetilde U}_{j1} & {\widetilde U}_{j2}
			\end{array}\right]\left[\begin{array}{ll}
				{\widetilde S}_{j1} & \\
				&{\widetilde S}_{j2}
			\end{array}\right]\left[\begin{array}{c}
				{\widetilde Y}_{j1}^{\mathrm H} \\
				{\widetilde Y}_{j2}^{\mathrm H}
			\end{array}\right],
		\end{equation*}	
		where $\widetilde{S}_{j1}\in \mathbb R^{r_j \times r_j}$. 
		
		\State The reduced models are given by 
		\begin{equation*}
			\begin{split}
				\widetilde{N}_{j1}=\widetilde{S}_{(j+1)1}^{-1/2}\widetilde{U}_{(j+1)1}^{\mathrm
					H}\widetilde{\mathbb 
					N}_j\widetilde{Y}_{j1}\widetilde{S}_{j1}^{-1/2},\quad
				&\widetilde{A}_j=\widetilde{S}_{j1}^{-1/2}\widetilde{U}_{j1}^{\mathrm
					H}\widetilde{\mathbb 
					A}_j\widetilde{Y}_{j1}\widetilde{S}_{j1}^{-1/2},\\
				\widetilde{B}_1=\widetilde{S}_{11}^{-1/2}\widetilde{U}_{11}^{\mathrm
					H}\widetilde{\mathbb B}_1,\quad
				&\widetilde{C}_k=\widetilde{\mathbb 
					C}_k\widetilde{Y}_{k1}\widetilde{S}_{k1}^{-1/2}. 
			\end{split}
		\end{equation*}	
	\end{algorithmic}
\end{algorithm}

\subsection{Execution of the data-driven BT}

In practice, the dynamical systems are defined typically by the real-valued 
matrices, which ensures a real-valued output for a given input function and a 
initial condition. However, Algorithm \ref{data_BT} results in dynamical systems 
with complex-valued matrices in general because of the evaluations of transfer 
function along the imaginary axis. In what follows, we provide an advanced 
procedure, which avoids the complex arithmetic completely and results in real-valued 
reduced models. 

Let the number of quadrature nodes for all variables $\omega_j$ be even, that is 
$\gamma_j$ is an even number for $j=1, 2, \cdots, k$. We assume that the quadrature 
nodes and the weights are distributed symmetrically along the real axis, i.e., 
\begin{equation*}
	\begin{split}
	\lambda_{1,j}< \lambda_{2,j}< \cdots< \lambda_{\gamma_{j/2},j}< 0 < 
	\lambda_{\gamma_{j/2+1},j} 
	< \cdots < \lambda_{\gamma_j-1,j} <\lambda_{\gamma_j,j},\\
	\mu_{1,j}< \mu_{2,j}<\cdots< \mu_{\gamma_{j/2},j}< 0 < \mu_{\gamma_{j/2+1},j} < 
	\cdots < 
	\mu_{\gamma_j-1,j}<\mu_{\gamma_j,j},
	\end{split}
\end{equation*}
such that $\lambda_{i_j, j}=-\lambda_{\gamma_j/2+i_j, j}, \mu_{i_j, 
j}=-\mu_{\gamma_j/2+i_j, j}$ and the associated weights $\rho_{i_j, 
j}=\rho_{\gamma_j/2+i_j, j}, \phi_{i_j, j}=\phi_{\gamma_j/2+i_j, j}$ for $i_j=1, 2, 
\cdots, \gamma_j/2$. We rearrange rows of the factor 
$\widetilde{R}_{jj}^{\top}$ such that all rows are ordered in pairs of 
conjugation, that is, if one row of $\widetilde{R}_{jj}^{\top}$ is as follows 
\begin{equation*}
	\phi_{i_k,k}\dots\phi_{i_j,j} C_{k}\Phi(\mathrm{i} 
	\mu_{i_k,k},A_k)N_{(k-1)1}\Phi(\mathrm{i} \mu_{i_{k-1},k-1},A_{k-1})\cdots 
	N_{j1}\Phi(\mathrm{i} \mu_{i_{j},j},A_{j}), 
\end{equation*}
the next one is 
\begin{equation*}
	\phi_{i_k,k}\dots\phi_{i_j,j} 
	C_{k}\Phi(\overline{\mathrm{i}\mu_{i_k,k}},A_k)N_{(k-1)1}\Phi(\overline{\mathrm{i}\mu_{i_{k-1},k-1}},A_{k-1})
	\cdots N_{j1}\Phi(\overline{\mathrm{i} \mu_{i_{j},j}},A_{j}),
\end{equation*}
where $\bar s$ is the conjugation of the complex number $s$.
Similarly, the columns of the factor $\widetilde{L}_{jj}$ are ordered in the same 
manner. We partition the matrices $\widetilde{\mathbb U}_j$ and $\widetilde{\mathbb 
A}_j$ into 
$2\times 2$ blocks, $\widetilde{\mathbb U}_j^{(2)}, \widetilde{\mathbb A}_j^{(2)}$, 
which are 
compatible with the conjugate pairs for the factors $\widetilde{R}_{jj}^{\top}$ and 
$\widetilde{L}_{jj}$.

Recall the expression of $\widetilde{\mathbb U}_{j}^{(h,l)}$ and $\widetilde{\mathbb 
A}_{j}^{(h,l)}$ in (\ref{U_ele}) and (\ref{A_ele}). It is clear that 
$\widetilde{\mathbb U}_{j}^{(h,l)}$ and $\widetilde{\mathbb A}_{j}^{(h,l)}$ satisfy 
the following linear system 
\begin{equation}
	\left\{
	\begin{split}
		\mathrm{i}\lambda_{i_j,j}\widetilde{\mathbb 
		U}_{j}^{(h,l)}-\widetilde{\mathbb 
			A}_{j}^{(h,l)}&=\delta_{hl}H_k(\mathrm{i}\mu_{i_{k},k}, \cdots, 
		\mathrm{i}\mu_{i_{j},j}, 
		\mathrm{i} \lambda_{i_{j-1},j-1}, \cdots, \mathrm{i} 
		\lambda_{i_1,1}),\\
		\mathrm{i}\mu_{i_j,j}\widetilde{\mathbb U}_{j}^{(h,l)}-\widetilde{\mathbb 
			A}_{j}^{(h,l)}&=\delta_{hl}H_k(\mathrm{i}\mu_{i_{k},k}, \cdots, 
			\mathrm{i} 
		\mu_{i_{j+1},j+1},\mathrm{i}\lambda_{i_{j},j}, \cdots, \mathrm{i} 
		\lambda_{i_1,1}).
	\end{split}
	\right.
\end{equation}
Due to the conjugation pair in the rows and the columns of 
$\widetilde{R}_{jj}^{\top}$ and $\widetilde{L}_{jj}$, respectively, the $2\times 2$ 
blocks
\begin{equation*}
\widetilde{\mathbb U}_j^{(2)}=\left[\begin{array}{ll}
	\widetilde{\mathbb U}_j^{(h,l)} & \widetilde{\mathbb U}_j^{(h,l+1)}\\
	\widetilde{\mathbb U}_j^{(h+1,l)}&\widetilde{\mathbb U}_j^{(h+1,l+1)}
\end{array}\right], 
\widetilde{\mathbb A}_j^{(2)}=\left[\begin{array}{ll}
\widetilde{\mathbb A}_j^{(h,l)} & \widetilde{\mathbb A}_j^{(h,l+1)}\\
\widetilde{\mathbb A}_j^{(h+1,l)}&\widetilde{\mathbb A}_j^{(h+1,l+1)}
\end{array}\right]
\end{equation*}
satisfy the following linear system 
\begin{equation}\label{block-sys}
	\left\{
	\begin{split}
		\widetilde{\mathbb U}_j^{(2)}\left[\begin{array}{ll}
			\mathrm{i}\lambda_{i_j,j} & 0 \\
			0 &\overline{\mathrm{i}\lambda_{i_j,j}}
		\end{array}\right]-\widetilde{\mathbb A}_j^{(2)}=\delta_{hl}
	\left[\begin{array}{ll}
		 H_k^{(\mu,1)}& H_k^{(\mu,2)}\\
		 \overline{H_k^{(\mu,2)}}&\overline{H_k^{(\mu,1)}}
	\end{array}\right], \\
    \left[\begin{array}{ll}
    	\mathrm{i}\mu_{i_j,j} & 0 \\
    0 &\overline{\mathrm{i}\mu_{i_j,j}}
    \end{array}\right]\widetilde{\mathbb U}_j^{(2)}-\widetilde{\mathbb 
    A}_j^{(2)}=\delta_{hl}
    \left[\begin{array}{ll}
    	H_k^{(\lambda,1)}& H_k^{(\lambda,2)}\\
    	\overline{H_k^{(\lambda,2)}}&\overline{H_k^{(\lambda,1)}}
    \end{array}\right],
	\end{split}
	\right.
\end{equation}
where the notations are defined as follows 
\begin{equation*}
	\begin{split}
	H_k^{(\mu,1)}&=H_k(\mathrm{i}\mu_{i_{k},k}, \cdots, 
	\mathrm{i}\mu_{i_{j},j}, 
	\mathrm{i} \lambda_{i_{j-1},j-1}, \cdots, \mathrm{i} 
	\lambda_{i_1,1}), \\
	H_k^{(\mu,2)}&=H_k(\mathrm{i}\mu_{i_{k},k}, \cdots, 
	\mathrm{i}\mu_{i_{j},j}, 
	\mathrm{i} \overline{\lambda_{i_{j-1},j-1}}, \cdots, \mathrm{i} 
	\overline{\lambda_{i_1,1}}),\\
	H_k^{(\lambda,1)}&=H_k(\mathrm{i}\mu_{i_{k},k}, \cdots, 
	\mathrm{i}\mu_{i_{j+1},j+1}, 
	\mathrm{i} \lambda_{i_{j},j}, \cdots, \mathrm{i} 
	\lambda_{i_1,1}), \\
	H_k^{(\lambda,2)}&=H_k(\mathrm{i}\mu_{i_{k},k}, \cdots, 
	\mathrm{i}\mu_{i_{j+1},j+1}, 
	\mathrm{i} \overline{\lambda_{i_{j},j}}, \cdots, \mathrm{i} 
	\overline{\lambda_{i_1,1}}).
	\end{split}
\end{equation*}
Note that we use the property in the above that $H_{k}(\overline{s_{1}}, \cdots, 
\overline{s_{k}})=\overline{H_{k}(s_{1}, \cdots, s_{k})}$ for the given points $s_1, 
\cdots, s_k\in \mathbb C$, which can be validated directly by the definition 
(\ref{tran-f}). With the unitary matrix 
\begin{equation*}
	J=\frac{1}{\sqrt 2}\left[
	\begin{array}{cc}
		1&-\mathrm{i}\\
		1&\mathrm{i}
	\end{array}
	\right],
\end{equation*}
we define the transformation $\widetilde{\mathbb U}_j^{(2)\mathrm 
R}=J^*\widetilde{\mathbb U}_j^{(2)}J$ and $\widetilde{\mathbb A}_j^{(2)\mathrm 
R}=J^*\widetilde{\mathbb A}_j^{(2)}J$, where $*$ denotes the conjugate transpose for 
a given matrix. Taking $\widetilde{\mathbb U}_j^{(2)\mathrm 
R}$ and $\widetilde{\mathbb A}_j^{(2)\mathrm 
R}$ into (\ref{block-sys}) and performing some basic matrix manipulation lead to 
\begin{equation}\label{coup-sylvster}
	\left\{
	\begin{split}
		\widetilde{\mathbb U}_j^{(2)\mathrm R}\Delta_{\lambda}-\widetilde{\mathbb 
		A}_j^{(2)\mathrm R}=\delta_{hl}
		\left[\begin{array}{ll}
			\mathrm{Re}\left(H_k^{(\mu,1)}\right)+\mathrm{Re}\left(H_k^{(\mu,2)}\right)
			 & 
			\mathrm{Im}\left(H_k^{(\mu,1)}\right)-\mathrm{Im}\left(H_k^{(\mu,2)}\right)\\
			-\mathrm{Im}\left(H_k^{(\mu,1)}\right)-\mathrm{Im}\left(H_k^{(\mu,2)}\right)&
			\mathrm{Re}\left(H_k^{(\mu,1)}\right)-\mathrm{Re}\left(H_k^{(\mu,2)}\right)
		\end{array}\right], \\
		\Delta_{\mu}\widetilde{\mathbb U}_j^{(2)\mathrm R}-\widetilde{\mathbb 
			A}_j^{(2)\mathrm R}=\delta_{hl}
		\left[\begin{array}{ll}
			\mathrm{Re}\left(H_k^{(\lambda,1)}\right)+\mathrm{Re}\left(H_k^{(\lambda,2)}\right)
			& 
			\mathrm{Im}\left(H_k^{(\lambda,1)}\right)-\mathrm{Im}\left(H_k^{(\lambda,2)}\right)\\
			-\mathrm{Im}\left(H_k^{(\lambda,1)}\right)-\mathrm{Im}\left(H_k^{(\lambda,2)}\right)&
			\mathrm{Re}\left(H_k^{(\lambda,1)}\right)-\mathrm{Re}\left(H_k^{(\lambda,2)}\right)
		\end{array}\right],
	\end{split}
	\right.
\end{equation}
where the coefficient matrices are 
\begin{equation*}
\Delta_{\lambda}=\left[\begin{array}{ll}
	0 & \lambda_{i_j,j} \\
	-\lambda_{i_j,j} & 0
\end{array}\right], \Delta_{\mu}=\left[\begin{array}{ll}
0 & \mu_{i_j,j} \\
-\mu_{i_j,j} & 0
\end{array}\right], 
\end{equation*}
and $\mathrm{Re}(\cdot), \mathrm{Im}(\cdot)$ denote the real part and the imaginary 
part of a complex number, respectively. Consequently, we can get $\widetilde{\mathbb 
U}_j^{(2)\mathrm R}$ and $\widetilde{\mathbb A}_j^{(2)\mathrm R}$ by solving the 
above linear system completely in real arithmetic. 

Similarly, we consider the $2\times 2$ block matrix $\widetilde{\mathbb N}_j^{(2)}$ 
of $\widetilde{\mathbb N}_j$. It follows from (\ref{N-exp}) that there holds 
\begin{equation*}
	\widetilde{\mathbb N}_j^{(2)}=\left[
	\begin{array}{cc}
		\widetilde{\mathbb N}_j^{(h,l)}&\widetilde{\mathbb N}_j^{(h,l+1)}\\
		\widetilde{\mathbb N}_j^{(h+1,l)}&\widetilde{\mathbb N}_j^{(h+1,l+1)}
	\end{array}
	\right]=\frac{\delta_{hl}}{\phi_j}\left[
	\begin{array}{cc}
		H_k^{(\lambda,1)}&H_k^{(\lambda,2)}\\
		\overline{H_k^{(\lambda,2)}}&\overline{H_k^{(\lambda,1)}}
	\end{array}
	\right]. 
\end{equation*}
With the transformation $\widetilde{\mathbb N}_j^{(2)\mathrm 
R}=J^*\widetilde{\mathbb N}_j^{(2)}J$, one can get 
\begin{equation*}
	\widetilde{\mathbb N}_j^{(2)\mathrm 
	R}=\frac{\delta_{hl}}{\phi_j}\left[\begin{array}{ll}
		\mathrm{Re}\left(H_k^{(\lambda,1)}\right)+\mathrm{Re}\left(H_k^{(\lambda,2)}\right)
		& 
		\mathrm{Im}\left(H_k^{(\lambda,1)}\right)-\mathrm{Im}\left(H_k^{(\lambda,2)}\right)\\
		-\mathrm{Im}\left(H_k^{(\lambda,1)}\right)-\mathrm{Im}\left(H_k^{(\lambda,2)}\right)&
		\mathrm{Re}\left(H_k^{(\lambda,1)}\right)-\mathrm{Re}\left(H_k^{(\lambda,2)}\right)
	\end{array}\right]. 
\end{equation*}
The $2\times 1$ and $1\times 2$ blocks of $\widetilde{\mathbb B}_1, 
\widetilde{\mathbb C}_k$, respectively, are defined as 
\begin{equation*}
	\widetilde{\mathbb B}_1^{(2)}=\left[
	\begin{array}{c}
	\widetilde{\mathbb B}_1^{(h)} \\
	\widetilde{\mathbb B}_1^{(h+1)}
	\end{array}
	\right]=\left[
	\begin{array}{c}
		\widetilde{\mathbb B}_1^{(h)} \\
		\overline{\widetilde{\mathbb B}_1^{(h)}}
	\end{array}
	\right], 
\end{equation*}
\begin{equation*}
	\widetilde{\mathbb C}_k^{(2)}=\left[
	\begin{array}{c}
		\widetilde{\mathbb C}_k^{(l)} \quad
		\widetilde{\mathbb C}_k^{(l+1)}
	\end{array}
	\right]=\left[
	\begin{array}{c}
		\widetilde{\mathbb C}_k^{(l)}\quad 
		\overline{\widetilde{\mathbb C}_k^{(l)}}
	\end{array}
	\right]. 
\end{equation*}
By defining the transformation $\widetilde{\mathbb B}_1^{(2)\mathrm 
R}=J^*\widetilde{\mathbb B}_1^{(2)}$ and $\widetilde{\mathbb C}_k^{(2)\mathrm 
R}=\widetilde{\mathbb C}_k^{(2)}J$, it yields 
\begin{equation*}
	\widetilde{\mathbb B}_1^{(2)\mathrm R}=\sqrt{2}\left[
	\begin{array}{c}
		\mathrm{Re}\left(\widetilde{\mathbb B}_1^{(h)}\right)\\
		-\mathrm{Im}\left(\widetilde{\mathbb B}_1^{(h)}\right)
	\end{array}
	\right], \widetilde{\mathbb C}_k^{(2)\mathrm R}=\sqrt{2}\left[
	\begin{array}{c}
		\mathrm{Re}\left(\widetilde{\mathbb C}_k^{(l)}\right)\quad 
		\mathrm{Im}\left(\widetilde{\mathbb C}_k^{(l)}\right)
	\end{array}
	\right].
\end{equation*}
As a result, the real-valued counterparts associated with the main terms 
$\widetilde{\mathbb{U}}_{j}, \widetilde{\mathbb{A}}_{j}, \widetilde{\mathbb{N}}_{j}, 
\widetilde{\mathbb{B}}_{1}$ and $\widetilde{\mathbb{C}}_{k}$ are given by 
\begin{equation*}
	\begin{split}
	\widetilde{\mathbb{U}}_{j}^{\mathrm R}=(I_{\bar{\mathcal 
	N}_j/2}\otimes J^*)\widetilde{\mathbb{U}}_{j}(I_{\mathcal 
	N_j/2}\otimes J),\quad &\widetilde{\mathbb{A}}_{j}^{\mathrm R}=(I_{\bar{\mathcal 
		N}_j/2}\otimes J^*)\widetilde{\mathbb{A}}_{j}(I_{\mathcal 
	N_j/2}\otimes J),\\
	\widetilde{\mathbb{N}}_{j}^{\mathrm R}=(I_{\bar{\mathcal 
			N}_{j+1}/2}\otimes J^*)\widetilde{\mathbb{N}}_{j}(I_{\mathcal 
		N_j/2}\otimes J), \quad &\widetilde{\mathbb{B}}_{1}^{\mathrm 
		R}=(I_{\bar{\mathcal 
			N}_1/2}\otimes J^*)\widetilde{\mathbb{B}}_{1}, \quad
			\widetilde{\mathbb{C}}_{k}^{\mathrm 
			R}=\widetilde{\mathbb{C}}_{k}(I_{\mathcal 
		N_k/2}\otimes J), 
	\end{split}
\end{equation*}
where $I$ is the identity matrix, and the subscript denotes its dimension. In 
practice, we can replace the main terms $\widetilde{\mathbb{U}}_{j}, 
\widetilde{\mathbb{A}}_{j}, \widetilde{\mathbb{N}}_{j}, \widetilde{\mathbb{B}}_{1}$ 
and $\widetilde{\mathbb{C}}_{k}$ in Algorithm \ref{data_BT} by  
$\widetilde{\mathbb{U}}_{j}^{\mathrm R}, \widetilde{\mathbb{A}}_{j}^{\mathrm R}, 
\widetilde{\mathbb{N}}_{j}^{\mathrm R}, \widetilde{\mathbb{B}}_{1}^{\mathrm R}$ and 
$\widetilde{\mathbb{C}}_{k}^{\mathrm R}$, and implement the data-driven BT algorithm 
completely in real arithmetic, thereby leading to real-valued reduced models. 

\begin{remark}
	It follows from (\ref{coup-sylvster}) that $\widetilde{\mathbb U}_j^{(2)\mathrm 
	R}$ satisfies a Sylvester equation, and thereby $\widetilde{\mathbb 
	U}_j^{\mathrm R}$ also is the solution of a large-scale Sylvester equation. In 
	\cite{Hamadi2023}, the low-rank approximate solution of Sylvester equations is 
	exploited to accelerate the execution of Loewner frameworks. Due to 
	the full-rank right-hand side, the accurate approximation to $\widetilde{\mathbb 
	U}_j^{\mathrm R}$ may not be available by solving the associated Sylvester 
	equation approximately, and we implement our approach by assembling 
	$\widetilde{\mathbb U}_j^{\mathrm R}$ explicitly in the simulation. 
\end{remark}

\section{Numerical results}\label{sec:sec-4}
In this section, we perform numerical simulation to illustrate the effectiveness of 
our approach. The proposed scheme is carried out via Matlab (R2021a) on a laptop 
with Intel(R) Core(TM) i5-8265U and 8 GB RAM.
The reduced models produced by Algorithm 1 and Algorithm 2 are referred as BT and 
DKBBT, respectively. We simply
use the same number of nodes for the quadrature approximation to $P_{jj}$ and 
$Q_{jj}$ about $\omega_j$ for $j=1, 2, \cdots, k$. The 
Matlab function \texttt{logspace} is used to produce the logarithmically spaced 
points $\lambda_{{i_j},j}$ and $\mu_{i_j,j}$, and the $k$-th
transfer function is evaluated at $\mathrm i\lambda_{{i_j},j}, \mathrm i\mu_{i_j,j} 
(j=1, 2, \cdots, k)$ to collect the sample data. The quadrature weights are 
chosen by the trapezoid quadrature rule in the approximation. More details on other 
representative quadrature rules can be found in \cite{Gosea2022}.

\begin{figure}[htb]
	\centering
	\begin{minipage}{0.40\textwidth} 
		\centering 
		\includegraphics[width=0.98\textwidth]{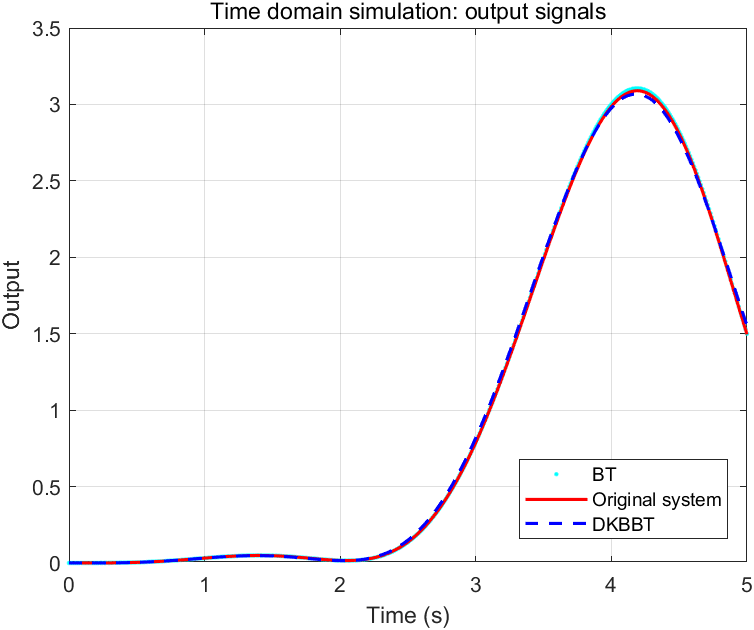}
	\end{minipage}
	\begin{minipage}{0.40\textwidth} 
		\centering 
		\includegraphics[width=0.98\textwidth]{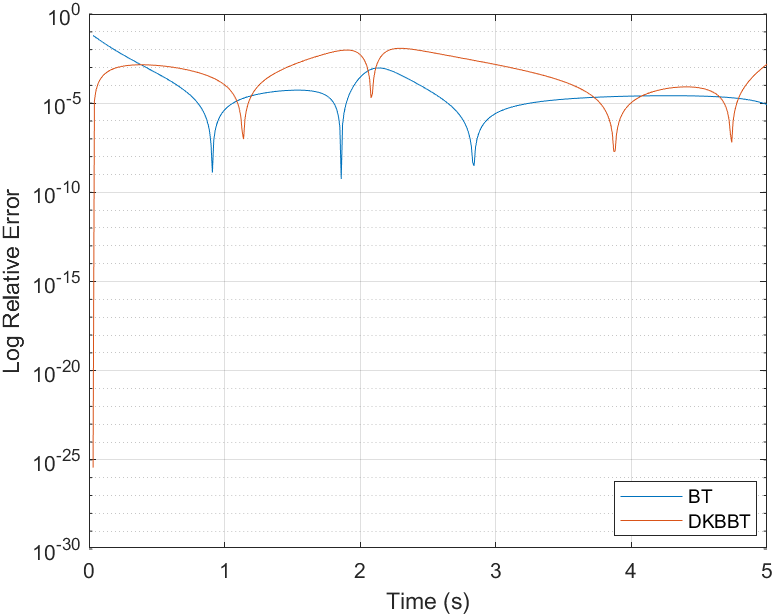}
	\end{minipage}
	\caption{Left: the responses for $u(t)=t\cos(t)$; Right: 
		relative errors for each method}
	\label{fig1}
\end{figure}

We consider  the K-power system composed of two subsystems given in \cite{Jin2023}. 
The order of each subsystem is $n_1=n_2=300$, and the order of K-power 
system is $n=600$. The system is determined by the following coefficient matrices
\begin{equation*}
A_{1}=\left[\begin{array}{lllll}
	-10 & 2 &  &  &    \\
	7 & -10 & 2 &  &    \\
	 & \ddots & \ddots &\ddots &    \\
	 &  &  7 & -10 & 2 \\
	 &  &  &  7 & -10
\end{array}\right],		
A_{2}=\left[\begin{array}{lllll}
	-5 & 2 &  &  &    \\
	2 & -5 & 2 &  &    \\
	& \ddots & \ddots &\ddots &    \\
	&  &  2 & -5 & 2 \\
	&  &  &  2 & -5
\end{array}\right],		
\end{equation*}		
\begin{equation*}	
N_{11}=\left[\begin{array}{lllll}
	2 & 1 &  &  &    \\
	-1 & 2 & 1 &  &    \\
	& \ddots & \ddots &\ddots &    \\
	&  &  -1 & 2 & 1 \\
	&  &  &  -1 & 2
\end{array}\right],	
B_{1}=\left[\begin{array}{l}
	1 \\
	1\\
	\vdots \\
	1\\
	1
\end{array}\right],
C_{2}^{\top}=\left[\begin{array}{l}
	0\\
	0\\
	\vdots\\
	0\\
	1
\end{array}\right].
\end{equation*}			
With the reduced order $r_1=r_2=25$, two reduced models are generated by Algorithm 1 
and 2. Figure \ref{fig1} and Figure \ref{fig2} depict the time responses and the 
associated relative errors of two 
reduced models when the system is impulsed by the inputs $u(t)=t\cos(t)$ and 
$u(t)=\sin(0.5t)\mathrm{e}^{-0.5t}$, respectively. The dynamical behavior of the 
original systems is well approximated by the two reduced models, and we can hardly 
distinguish them clearly for the depiction. The relative error shows that the 
data-driven BT and the standard BT exhibit almost the same approximation accuracy in 
this example, which agrees with our expectation. Note that the slight discrepancy 
shown in the relative depiction comes from the round-off in the execution of 
Algorithm 1 and Algorithm 2.

\begin{figure}[htb]
	\centering
	\begin{minipage}{0.40\textwidth} 
		\centering 
		\includegraphics[width=0.98\textwidth]{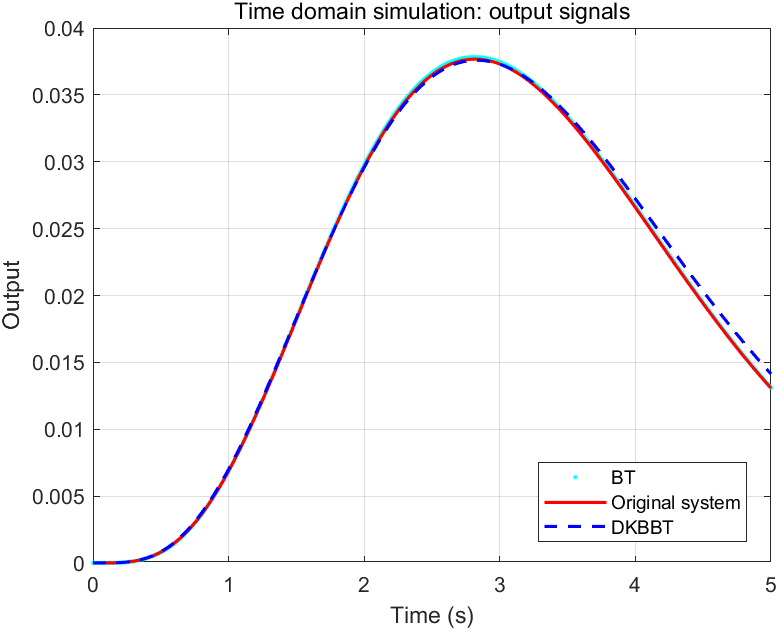}
	\end{minipage}
	\begin{minipage}{0.40\textwidth} 
		\centering 
		\includegraphics[width=0.98\textwidth]{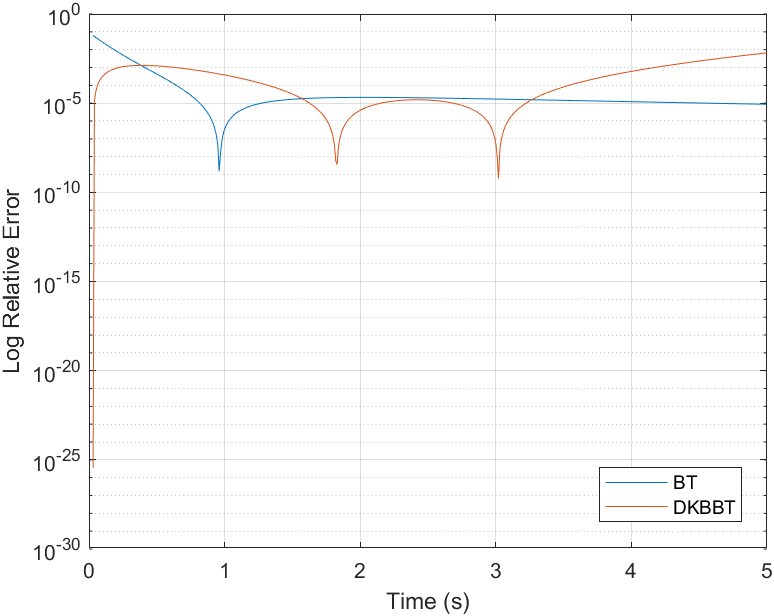}
	\end{minipage}
	\caption{Left: the responses for $u(t)=\sin(0.5t)\mathrm{e}^{-0.5t}$; Right: 
		relative errors for each method}
	\label{fig2}
\end{figure}

\section{Conclusions}\label{sec:sec-5}
	We have presented a nonintrusive BT procedure for K-power bilinear systems by 
	using the evaluations of transfer function. The numerical quadrature rule 
	provides a low-rank approximation to Grimians of the systems, and the explicit 
	relationship between the low-rank execution of BT procedure and the measurements 
	of systems pave the way for producing reduced models directly in a nonintrusive 
	manner. The execution of the proposed approach in real arithmetic is discussed 
	in detail, and the real-valued reduced models can be generated naturally, which 
	facilitates the application of our approach a lot in practice. The simulation 
	results indicate that our method can reproduce faithfully the performance of the 
	standard BT approach for K-power systems. 	

\appendix
\setcounter{figure}{0}

	\addcontentsline{toc}{section}{Reference}
	\markboth{Reference}{}
	\bibliographystyle{elsarticle-num}
	\bibliography{reference}

@article{Hamadi2023,
  author  = {Hamadi, M.A. and Jbilou, K. and Ratnani, A.},
  journal = {Journal of Scientific  Computing},
  title   = {A Data-Driven {Krylov} Model Order Reduction for Large-Scale Dynamical Systems},
  year    = {2023},
  volume  = {95},
  number  = {2},
  pages   = {1-27},
  doi     = {}
}

@book{Benner2017book,
  author    = {Benner, Peter},
  title     = {System Reduction for Nanoscale IC Design},
  year      = {2017},
  publisher = {Springer Nature, Switzerland}
}

@inproceedings{Ramaswamy2000,
  author    = {Ramaswamy, D. and White, J.},
  booktitle = {Technical Proceedings of the Fourth International Conference on Modeling and Simulation of Microsystems},
  title     = {Automatic generation of small-signal dynamic macromodels from {3-D} simulation},
  year      = {2000},
  volume    = {},
  number    = {},
  pages     = {27-30}
}

@article{Su1991,
  author  = {Su, T.J. and Raig, R.R.},
  journal = {Journal of Guidance, Control, and Dynamics},
  title   = {Model reduction and control of flexible structures using {Krylov} vectors},
  year    = {1991},
  volume  = {14},
  number  = {2},
  pages   = {260-267}
}

@book{Antoulasbook2005,
  author    = {Antoulas, A.C.},
  title     = {Approximation of Large-Scale Dynamical Systems},
  year      = {2005},
  publisher = {SIAM, Philadelphia}
}

@article{Baur2014,
  author  = {Baur, Ulrike and Benner, Peter and Feng, Lihong},
  journal = {Archives of Computational Methods in Engineering},
  title   = {Model Order Reduction for Linear and Nonlinear Systems: A System-Theoretic Perspective},
  year    = {2014},
  volume  = {21},
  number  = {},
  pages   = {331-358}
}

@article{Pan2013,
  author  = {Pan, Xiaoda and Zhu, Hengliang and Yang, Fan and Zeng, Xuan},
  journal = {Communications in Computational Physics},
  title   = {Subspace Trajectory Piecewise-Linear Model Order Reduction for Nonlinear Circuits},
  year    = {2013},
  volume  = {14},
  number  = {3},
  pages   = {639-663}
}

@article{Chaturantabut2010,
  author  = {Chaturantabut, Saifon and Sorensen, D.C.},
  journal = {SIAM Journal on Scientific Computing},
  title   = {Nonlinear model reduction via discrete empirical interpolation},
  year    = {2010},
  volume  = {32},
  number  = {5},
  pages   = {2737-2764}
}

@article{Gu2011,
  author  = {Gu, Chenjie},
  journal = {IEEE Transactions on Computer-Aided Design of Integrated Circuits and Systems},
  title   = {A projection-based nonlinear model order reduction approach using quadratic-linear representation of nonlinear systems},
  year    = {2011},
  volume  = {30},
  number  = {},
  pages   = {1307-1320}
}

@book{Rughbook1981,
  author    = {Rugh, W.J.},
  title     = {Nonlinear System Theory},
  year      = {1981},
  publisher = {Johns Hopkins University Press, Baltimore}
}

@article{Zhang2002,
  author  = {Zhang, Liqian and Lam, James},
  journal = {Automatica},
  title   = {On {$\mathcal H_2$} model reduction of bilinear systems},
  year    = {2002},
  volume  = {38},
  number  = {},
  pages   = {205-216}
}

@article{Bennerbil2012,
  author  = {Benner, Peter and Breiten, Tobias},
  title   = {Interpolation-based {$\mathcal H_2$}-model reduction of bilinear control systems},
  journal = {SIAM Journal on Matrix Analysis and Applications},
  year    = {2012},
  volume  = {33},
  number  = {3},
  pages   = {869-885}
}

@article{Flagg2015,
  author  = {Flagg, Garret and Gugercin, Serkan},
  title   = {Multipoint {Volterra} series interpolation and {$\mathcal H_2$} optimal model reduction of bilinear systems},
  journal = {SIAM Journal on Matrix Analysis and Applications},
  year    = {2015},
  volume  = {36},
  number  = {2},
  pages   = {549-579}
}

@inproceedings{Hsu1983,
  author    = {Hsu, C.S. and Desai, U.B. and Crawley, C.A.},
  booktitle = {Proceedings of the 22nd IEEE Conference on Decision and Control, San Antonio, TX},
  title     = {Realization algorithms and approximation methods of bilinear systems},
  year      = {1983},
  volume    = {},
  number    = {},
  pages     = {783-788}
}

@article{Bennerbil2011,
  author  = {Benner, Peter and Damm, T.},
  title   = {Lyapunov equations, energy functionals, and model order reduction of bilinear and stochastic systems},
  journal = {SIAM Journal on Control and Optimization},
  year    = {2011},
  volume  = {49},
  number  = {},
  pages   = {686-711}
}

@article{Wang2012,
  author  = {Wang, Xiaolong and Jiang, Yaolin},
  title   = {Model reduction of bilinear systems based on {Laguerre} series expansion},
  journal = {Journal of the Franklin Institute},
  year    = {2012},
  volume  = {349},
  number  = {},
  pages   = {1231-1246}
}

@inproceedings{Baiyat1993,
  author    = {Al-baiyat, S.A. and Bettayet, M.},
  booktitle = {Proceedings of the 32nd Conferences on Decision and Control, San Antonio, TX},
  title     = {A New Model Reduction Scheme for {K-power} Bilinear Systems},
  year      = {1993},
  volume    = {},
  number    = {},
  pages     = {22-27}
}

@article{Wang2014,
  author  = {Wang, Xiaolong and Jiang, Yaolin},
  title   = {On mdoel reduction of {K-power} bilinear systems},
  journal = {International Journal of Systems Science},
  year    = {2014},
  volume  = {45},
  number  = {9},
  pages   = {1978-1990}
}

@article{Qi2021,
  author  = {Qi, Zhenzhong and Jiang, Yaolin and Xiao, Zhihua},
  title   = {Dimension reduction for {K-power} bilinear systems using orthogonal polynomials and {Arnoldi} algorithm},
  journal = {International Journal of Systems Science},
  year    = {2021},
  volume  = {52},
  number  = {10},
  pages   = {2048-2063}
}

@article{Zhang2024,
  author  = {Zhang, Yansong and Xiao, Zhihua and Jiang, Yaolin},
  title   = {Finite-time model order reduction for {K-power} bilinear systems via shifted {Legendre} polynomials},
  journal = {Computational and Applied Mathematics},
  year    = {2024},
  volume  = {43},
  number  = {},
  pages   = {158}
}

@article{Gosea2022,
  author  = {Gosea, I. V. and Gugercin, S. and Beattie , C.},
  title   = {Data-driven balancing of linear dynamical systems},
  journal = {SIAM Journal on Scientific Computing},
  year    = {2022},
  volume  = {44},
  number  = {1},
  pages   = {A554–A582}
}

@article{Padhi2025,
  author  = {Padhi, Reetish and Gosea, Ion Victor and Duff, Igor Pontes and Gugercin, Serkan},
  title   = {Data-Driven Balancing Formulation for Linear Systems With Quadratic Outputs},
  journal = {IEEE Control Systems Letters},
  year    = {2025},
  volume  = {9},
  number  = {},
  pages   = {2843-2848}
}

@article{Wang2025,
  author  = {Wang, Xiaolong and Yang, Xuerong and Wang, Xiaoli and Jiang, Yaolin},
  title   = {Data-driven balanced truncation for second-order systems via the approximate {Gramians}},
  journal = {Numerical Algorithms},
  year    = {2025},
  volume  = {online},
  number  = {},
  pages   = {https://doi.org/10.1007/s11075-025-02253-z}
}

@article{Bhattacharjee2025,
  author  = {Bhattacharjee, Debraj and Moreschini, Alessio and Astolfi, Alessandor},
  title   = {Signal generator agnostic moment metching},
  journal = {IEEE Tansactioins on Automatic Control},
  year    = {2025},
  volume  = {70},
  number  = {1},
  pages   = {7493-7508}
}

@article{Geelen2023,
  author  = {Geelen, Rudy and Wright, Stephen and Willcox, Karen},
  title   = {Operator inference for non-intrusive model reduction with quadratic manifolds},
  journal = {Computer Methods in Applied Mechanics and Engineering},
  year    = {2023},
  volume  = {403},
  number  = {},
  pages   = {115717}
}

@article{Ionita2014,
  author  = {Ionita, A.C. and Antoulas, A.C.},
  title   = {Data-driven paramterized model reduction in the {Loewner} framework},
  journal = {SIAM Journal on Scientific Computing},
  year    = {2014},
  volume  = {36},
  number  = {3},
  pages   = {A984-A1007}
}

@article{Huhn2023,
  author  = {Huhn, Quincy A. and Tano, Mauricio E. and Ragusa, Jean C. and Choi, Youngsoo},
  title   = {Parametric dynamic mode decomposition for reduced order modeling},
  journal = {Journal of Computational Physics},
  year    = {2023},
  volume  = {475},
  number  = {},
  pages   = {111852}
}

@article{Burohman2023,
  author  = {Burohman, Azka Muji and Besselink, Bart and Scherpen, J.M.A. and Camlibel, M.K.},
  title   = {From Data to Reduced-Order Models via Generalized Balanced Truncation},
  journal = {IEEE Tansactioins on Automatic Control},
  year    = {2023},
  volume  = {68},
  number  = {10},
  pages   = {6160-6175}
}

@article{Jin2023,
  author  = {Jin, H. and Xiao, Zhihua and Song, Qiuyan and Qi, Zhenzhong},
  title   = {Structure-preserving model order reduction for {K-power} bilinear systems via {Laguerre} functions},
  journal = {International Journal of Systems Science},
  year    = {2023},
  volume  = {54},
  number  = {8},
  pages   = {1648-1660}
}

\end{document}